\numberwithin{equation}{section}
\newtheorem{theor}{Theorem}
\newtheorem{coro}[theor]{Corollary}
\newtheorem{lemma}[theor]{Lemma}
\theoremstyle{definition}
\newtheorem{defi}[theor]{Definition}
\theoremstyle{remark}
\DeclareMathOperator{\supp}{supp}
\DeclareMathOperator{\conv}{conv}
\newcommand{\calF}{\mathcal F}
\newcommand{\calH}{\mathcal H}
\newcommand{\calX}{\mathcal X}
\newcommand{\Xe}{\mathcal X_\varepsilon}
\newcommand{\Ye}{\mathcal Y_\varepsilon}
\newcommand{\Xk}{\mathcal X_K}
\newcommand{\Yk}{\mathcal Y_K}
\newcommand{\PP}{\mathbb P}
\title{Banach spaces from a construction scheme}
\author{Fulgencio Lopez}
\address{Department of Mathematics\\ University of Toronto\\ Bahen Center 40 St. George St.\\ Toronto, Ontario M5S 2E4, Canada.}
\email{fulgencio.lopez@mail.utoronto.ca}
\subjclass[2010]{03E75, 03E05, 46A35, 46B03.}
\keywords{Construction schemes, almost biorthogonal systems, uncountable basic sequence, Mazur intersection property.}
\begin{document}

\begin{abstract}
We construct a Banach space $\mathcal X_\varepsilon$ with an uncountable $\varepsilon$-biorthogonal system but no uncountable $\tau$-biorthogonal system for $\tau<\varepsilon(1+\varepsilon)^{-1}$. In particular the space have no uncountable biorthogonal system. We also construct a Banach space $\mathcal X_K$ with an uncountable $K$-basic sequence but no uncountable $K'$-basic sequence, for $1\leq K'<K$. A common feature of these examples is that they are both constructed by recursive amalgamations using a single construction scheme.
\end{abstract}

\maketitle

\section{Introduction}

The class of nonseparable Banach spaces exhibit phenomena which are not present in the more studied class of separable Banach spaces. Some of the most striking differences were discovered recently by
J. Lopez-Abad and S. Todorcevic \cite{LAT} when they were developing forcing constructions of Banach spaces via finite-dimensional approximations. For example, it is shown in \cite{LAT} that for every $\varepsilon>0$ rational, there is a forcing notion $\PP_\varepsilon$ which forces a Banach space $\Ye$ with an uncountable $\varepsilon$-biorthogonal system and such that for every $0\leq\tau<\frac{\varepsilon}{1+\varepsilon}$, $\Ye$ has no uncountable $\tau$-biorthogonal system. They also showed (\cite[Theorem 6.4]{LAT}) that for every constant $K>1$ there is a forcing notion $\PP_K$ which forces a Banach space  $\Yk$ with an uncountable $K$-basis yet for every $1\leq K'<K$, $\Yk$ has no uncountable
$K'$-basic sequences. Recall that none of these two phenomena can happen in the class of separable Banach spaces when, of course, we replace `uncountable' by `infinite'.

In \cite{todor}, S. Todorcevic introduced a notion of construction scheme of uncountable mathematical objects via finite approximation and simultaneous multiple amagamations. While the construction scheme $\mathcal{F}$ can be described relying only on ordinary axioms of set theory, their crucial properties of `capturing' (see the definition below) can only be provided using Jensen's combinatorial principle $\Diamond$: there are sets $A_\alpha\subset\alpha$ for every $\alpha<\omega_1$ such that for every $A$ subset of $\omega_1$ there are stationarily many $\alpha$'s with $A\cap\alpha=A_\alpha$.

The purpose of this note is to apply this construction scheme to the theory of nonseparable Banach spaces inspired by the forcing constructions of \cite{LAT}. In particular, we prove the following two results

\begin{theor}\label{theorem.1} Assume $\Diamond$.
Then for every $\varepsilon\in (0,1)\cap\mathbb Q$, there is a Banach space $\Xe$
with an uncountable $\varepsilon$-biorthogonal system but no uncountable $\tau$-biorthogonal system for every $0\leq\tau<\frac{\varepsilon}{1+\varepsilon}$.
\end{theor}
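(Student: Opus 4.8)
The plan is to realize $\Xe$ as a completion of $c_{00}(\omega_1)$ whose norm is the supremum over a norming set that is built by recursive amalgamation along a construction scheme $\calF$, imitating the finite‑dimensional approximation construction of $\Ye$ in \cite{LAT} with the generic filter for $\PP_\varepsilon$ replaced by the capturing property of $\calF$; this is the only place where $\Diamond$ enters. First I would invoke \cite{todor} to fix, under $\Diamond$, a construction scheme $\calF=\bigcup_n\calF_n$ on $\omega_1$ of a convenient type together with its capturing property: for every uncountable family of pairwise disjoint finite subsets of $\omega_1$ of a common ``shape'' and every $k$, some $F\in\calF$ has canonical $k$‑decomposition consisting of $k$ members of that family, matched by the canonical order isomorphisms; I would also record that every finite subset of $\omega_1$ is contained in a member of each sufficiently large $\calF_n$.

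Next I would define the norming set $W\subseteq c_{00}(\omega_1)$ by recursion: starting from atoms supported on members of $\calF$, whenever $F\in\calF$ has canonical decomposition $(F_0,\dots,F_{k-1})$, $\iota_i\colon F_0\to F_i$ are the canonical isomorphisms, and $w$ is a functional already produced and supported in $F_0$, I would adjoin the amalgamated functional $w+\varepsilon\sum_{i=1}^{k-1}\iota_i(w)$ together with its normalizations by powers of $\varepsilon(1+\varepsilon)^{-1}$, choosing the weights exactly as in \cite{LAT} so that no element of $W$ can isolate a single coordinate and so that iterating the amalgamation yields the threshold $\varepsilon(1+\varepsilon)^{-1}$. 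Then $\Xe$ is the completion of $(c_{00}(\omega_1),\sup_{w\in W}|w(\cdot)|)$, and along the recursion I would read off, for each $\alpha$, a vector $x_\alpha$ and a functional $x_\alpha^*\in W$ from the member of $\calF$ that introduces $\alpha$. Verifying that $\{(x_\alpha,x_\alpha^*)\}_{\alpha<\omega_1}$ is an uncountable $\varepsilon$‑biorthogonal system should then be routine: $x_\alpha^*(x_\alpha)=1$ by construction, $\|x_\alpha\|\le C$ and $\|x_\alpha^*\|\le1$ by an induction over the recursion defining $W$ (nothing in $W$ is too large on the $x_\beta$'s), and $|x_\alpha^*(x_\beta)|\le\varepsilon$ for $\alpha\neq\beta$ directly from the $\varepsilon$‑weight carried by off‑diagonal amalgamated coordinates.

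The core of the argument is to rule out an uncountable $\tau$‑biorthogonal system $\{(u_\alpha,g_\alpha)\}_{\alpha<\omega_1}$ for $\tau<\varepsilon(1+\varepsilon)^{-1}$. Normalizing so that $\|u_\alpha\|=1$, $g_\alpha(u_\alpha)=1$, $\|g_\alpha\|\le M$ and $|g_\alpha(u_\beta)|\le\tau$ for $\alpha\neq\beta$, I would replace each $u_\alpha,g_\alpha$ by a finitely supported $\delta$‑approximant and then, using the $\Delta$‑system lemma together with the separability of the relevant finite‑dimensional spaces, pass to an uncountable set $\Gamma$ along which the supports form a $\Delta$‑system with root $R$, the root parts are fixed, and the free parts are, up to the canonical order isomorphisms and up to $\delta$, copies of a single normalized pattern; one further refinement arranges the free parts to lie in a fixed $\calF_n$, after which $R$ can be absorbed. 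Feeding the resulting uncountable family of blocks into the capturing property, for $k$ as large as I wish I obtain $\alpha_1<\dots<\alpha_k$ in $\Gamma$ and $F\in\calF$ capturing their blocks; the amalgamation that built $W$ at $F$ then contributed a functional $\phi\in W$, hence with $\|\phi\|\le1$, which aggregates the $k$ matched copies of the $W$‑functional nearly norming the common pattern, carrying the $\varepsilon(1+\varepsilon)^{-1}$‑weighting. Evaluating $\phi$ and the $g_{\alpha_i}$'s against the test vector $u_{\alpha_1}+\dots+u_{\alpha_k}$ (or a suitable signed variant) and chaining the inequalities as in the analysis of $\Ye$ in \cite{LAT}, I expect to force $|g_{\alpha_i}(u_{\alpha_j})|\ge\varepsilon(1+\varepsilon)^{-1}-o_k(1)-O(\delta)$ for some $i\neq j$; for $k$ large and $\delta$ small this contradicts $|g_{\alpha_i}(u_{\alpha_j})|\le\tau$.

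The main obstacle is this last step, and it has two faces. One is the reduction that makes the capturing property applicable: passing to a $\Delta$‑system, stabilizing the shapes of the $(u_\alpha,g_\alpha)$, and especially forcing the free parts of the blocks to be genuine members of the scheme while keeping the root under control. The other is the final inequality chain, where the $\varepsilon(1+\varepsilon)^{-1}$‑weights of the amalgamated functional must be balanced against the ``$1+(k-1)\cdot(\text{cross term})$'' contributed by the $g_{\alpha_i}$'s so that exactly the constant $\varepsilon(1+\varepsilon)^{-1}$ survives in the limit $k\to\infty$. Getting the amalgamation weights right in the construction of $W$ — so that $\{(x_\alpha,x_\alpha^*)\}$ is genuinely $\varepsilon$‑ and not better‑than‑$\varepsilon$‑biorthogonal while the amalgamated functionals still lie in the unit ball — is the remaining delicate point.
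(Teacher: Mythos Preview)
Your high-level strategy matches the paper's: build the norm as a sup over a norming set $\calH$ amalgamated recursively along a capturing construction scheme, read off an $\varepsilon$-biorthogonal system $(e_\alpha,h_\alpha)$ from the construction, then use capturing plus a $\Delta$-system refinement to defeat any putative $\tau$-biorthogonal system with $\tau<\varepsilon/(1+\varepsilon)$. But both of the concrete commitments you make --- the amalgamation rule and the shape of the contradiction --- are off, and this is where the content lies.

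The paper's amalgamation is not the symmetric $w+\varepsilon\sum_{i\ge1}\iota_i(w)$, and there is no normalization by powers of $\varepsilon(1+\varepsilon)^{-1}$. For $F=\bigcup_{i<n_k}F_i$ and $\alpha\in F_0\setminus R(F)$ one sets $h_\alpha^F=h_\alpha^{F_0}+\varepsilon\sum_{i\ge2}(-1)^i\,\varphi_i(h_\alpha^{F_0})\restriction(F_i\setminus F_0)$: the $F_1$-block is \emph{omitted} and the tail carries alternating signs; the rule for $\alpha\in F_1\setminus R(F)$ is the mirror image with signs shifted by one; and for $\alpha\in F_j$ with $j\ge2$ one simply keeps $h_\alpha^{F_j}$ with no extension at all. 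The contradiction then does not proceed by exhibiting a large cross term $|g_{\alpha_i}(u_{\alpha_j})|$. Rather, from $\tau$-biorthogonality one first shows that for every choice $\alpha_0<\cdots<\alpha_{2n+1}$ in a suitable uncountable set and any $m,n$ with $m/(2n)=\varepsilon$, the vector
\[
w=(y_{\alpha_0}-y_{\alpha_1})-\tfrac{1}{m}\sum_{i=1}^n\bigl(y_{\alpha_{2i}}-y_{\alpha_{2i+1}}\bigr)
\]
has norm at least $\delta=N^{-1}\bigl(1-\tau\,\tfrac{1+\varepsilon}{\varepsilon}\bigr)>0$; this inequality, coming only from applying $y_{\alpha_1}^*$, is exactly where the threshold $\varepsilon/(1+\varepsilon)$ appears. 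After approximating by rational $x_\alpha$'s, refining, and capturing $2n+2$ of them by some $F\in\calF$, one checks that every $f\in\calH_F$ gives $|\langle f,w\rangle|<\delta/2$: the alternating $\pm\varepsilon$ tails force the type-(2) and type-(3) functionals to evaluate to \emph{exactly} zero on $w$ via the identity $1-\varepsilon\cdot\tfrac{2n}{m}=0$, while the unextended type-(4) functionals see only a single $\tfrac{1}{m}$-term. Hence $\|w\|_\varepsilon<\delta$, a contradiction. The functionals $g_{\alpha_i}$ are used only for the \emph{lower} bound on $\|w\|$; one never argues that some $|g_{\alpha_i}(u_{\alpha_j})|$ is large. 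Your own diagnosis that ``getting the amalgamation weights right'' is the delicate point is correct, but the resolution is this asymmetric alternating-sign rule paired with the specific test vector $w$, not a symmetric $\varepsilon$-spread with $\varepsilon(1+\varepsilon)^{-1}$-renormalizations evaluated against $\sum u_{\alpha_i}$.
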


\begin{theor}\label{theorem.2} Assume
$\Diamond$.
Then for every constant $K> 1$, there is a Banach space $\Xk$ with a
$K$-basis of length $\omega_1$ 
but no uncountable $K'$-basic sequence for every $1\leq K'<K$.
\end{theor}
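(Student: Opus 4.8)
The plan is to realize $\Xk$ as the completion of $c_{00}(\omega_1)$ under a norm defined by recursion along a \emph{capturing construction scheme} $\calF=\bigcup_{n}\calF_n$, whose existence is guaranteed by $\Diamond$~\cite{todor}; this recursion plays the role that the forcing construction via finite-dimensional approximations plays in \cite[Theorem 6.4]{LAT}. Concretely, to each approximation $F\in\calF$ one attaches a norm $\|\cdot\|_F$ on $c_{00}(F)$, defined by induction on the level: if $F\in\calF_{n+1}$ has canonical decomposition $F=\bigcup_{i<r}F^i$ into isomorphic copies of some $E\in\calF_n$, with the canonical isomorphisms $\iota_i\colon E\to F^i$ fixing the common root $R$, one declares the unit ball of $\|\cdot\|_F$ to be the convex hull of the copies $\iota_i[B_E]$ of the unit ball $B_E$ of $\|\cdot\|_E$ together with ``cancellation'' segments in the directions $e_{\iota_i(\gamma)}-e_{\iota_j(\gamma)}$ ($\gamma\in E\setminus R$, $i<j$), stretched by a factor tuned to $K$. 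One then sets $\|x\|$ to be $\|x\|_F$ for the $\calF$-least $F$ containing $\supp(x)$, and lets $\Xk$ be the completion of $(c_{00}(\omega_1),\|\cdot\|)$.

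The first task is to check that this recipe coheres: if $E\subseteq F$ are approximations then $\|\cdot\|_F$ restricted to $c_{00}(E)$ equals $\|\cdot\|_E$. The point is that each copy $\iota_i[B_E]$ is recovered exactly as the slice of the amalgamated ball through the corresponding coordinate plane — the other copies and the inserted segments meet that plane only along the root directions, so they do not enlarge it — together with the fact that the tree structure of $\calF$ places $E$ compatibly inside the subapproximations of $F$. Granting coherence, $(e_\alpha)_{\alpha<\omega_1}$ is a normalized transfinite sequence in $\Xk$, and I would prove, again by induction on the level, that every initial-segment projection is bounded by $K$ on $c_{00}(F)$: the only new extreme points introduced at level $n+1$ are the stretched cancellation vertices, and since each of these has both a head and a tail whose images under the projection again lie in $K$ times the ball, the estimate propagates from $E$ to $F$. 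This is a careful but routine convexity computation and yields that $(e_\alpha)_{\alpha<\omega_1}$ is a $K$-basis of length $\omega_1$; that its constant is exactly $K$ follows from the presence of the cancellation vectors.

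The core of the proof is that there is no uncountable $K'$-basic sequence when $1\le K'<K$. Suppose $(x_\xi)_{\xi<\omega_1}$ were one; after normalizing and replacing each $x_\xi$ by a finitely supported perturbation, a $\Delta$-system argument together with a pigeonhole on the $\delta$-discretized coefficient patterns produces an uncountable subfamily, still $K'$-basic, of the form $x_\xi=r+u_\xi$, where $r$ is a fixed vector supported on the common root (an initial segment of each $\supp(x_\xi)$), the $\supp(u_\xi)$ are pairwise disjoint off the root, and all $u_\xi$ are $\delta$-close to a single pattern carried along the order isomorphisms of their supports. Choosing the level $n$ large enough that this common pattern embeds into an $\calF_n$-set, the capturing property of $\calF$ yields $F\in\calF$ with decomposition $F=\bigcup_i F^i$ and ordinals $\xi_0<\xi_1$ with $\supp(x_{\xi_l})\subseteq F^l$ aligned by the canonical isomorphism and with the root parts identified. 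Then $x_{\xi_0}-x_{\xi_1}$ equals, up to $O(\delta)$, a difference of two copies of $u_{\xi_0}$, which the amalgamation at level $n$ forces to have norm close to $\|x_{\xi_0}\|/K=1/K$, while $P(x_{\xi_0}-x_{\xi_1})=x_{\xi_0}$ for the initial-segment projection $P$ onto $\{x_\eta:\eta\le\xi_0\}$. Hence $\|P(x_{\xi_0}-x_{\xi_1})\|$ is close to $K\|x_{\xi_0}-x_{\xi_1}\|$, which is $>K'\|x_{\xi_0}-x_{\xi_1}\|$ once $\delta$ is small, contradicting $K'$-basicness.

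I expect the main obstacle to be the exact design of the amalgamation. It must make the unit ball \emph{as large as possible} in the cancellation directions, so that captured configurations realize a basis constant arbitrarily close to $K$, while keeping it \emph{small enough} that all coordinate-restriction operators stay bounded by $K$ and that the norms cohere along $\calF$; the genuinely delicate point is arranging that, in the captured configuration, the root part of $x_{\xi_0}$ really participates in the cancellation, so that the estimate is governed by $\|x_{\xi_0}\|$ rather than by the possibly larger $\|u_{\xi_0}\|$. By contrast the $\Delta$-system and uniformization steps are standard, and capturing a fixed uncountable family is delivered by the scheme, so once the amalgamation is set up correctly the two conclusions follow as above. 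The same scheme $\calF$, with the cancellation replaced by an amalgamation suited to almost biorthogonal systems, gives Theorem~\ref{theorem.1}.
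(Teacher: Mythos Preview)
Your overall architecture---recursive amalgamation along a capturing construction scheme, verification that $(e_\alpha)_{\alpha<\omega_1}$ is a $K$-basis, then capture to defeat a putative $K'$-basic sequence---matches the paper. But the two-vector capture argument at the end has a genuine gap that no choice of amalgamation can close.

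Your key claim is that once $x_{\xi_0}, x_{\xi_1}$ are captured in adjacent pieces $F^0, F^1$ and aligned by the canonical isomorphism, the amalgamation forces $\|x_{\xi_0}-x_{\xi_1}\|$ close to $\|x_{\xi_0}\|/K = 1/K$. But $x_{\xi_0}-x_{\xi_1} = u_{\xi_0}-u_{\xi_1}$: the root part $r$ cancels identically, so no design of the ball on $F$ can make this norm depend on $r$. On the other hand, the very $K$-basicness of $(e_\alpha)$ that you need, applied at the cut between $F^0$ and $F^1$, gives $\|u_{\xi_0}\| \le K\|u_{\xi_0}-u_{\xi_1}\|$. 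Hence $\|x_{\xi_0}-x_{\xi_1}\| \ge \|u_{\xi_0}\|/K$, and whenever $r$ and $u_{\xi_0}$ partially cancel one has $\|u_{\xi_0}\| > 1$, so $\|x_{\xi_0}-x_{\xi_1}\| > 1/K$ and you cannot push the constant past a prescribed $K' < K$. You correctly flag this as ``the genuinely delicate point,'' but it is an obstruction, not a delicacy: with only two captured vectors the root simply disappears from the difference and cannot ``participate in the cancellation.'' (Separately, segments only in the directions $e_{\iota_i(\gamma)}-e_{\iota_j(\gamma)}$ would not even give $\|u_{\xi_0}-u_{\xi_1}\| \le \|u_{\xi_0}\|/K$ for general $u_{\xi_0}$; one would at minimum need to insert $K(\iota_i(v)-\iota_j(v))$ for all $v \in B_E$.)

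The paper avoids this by working dually---building a norming set $\calH$ rather than the unit ball---and, crucially, by capturing $2n$ vectors rather than two. The norming set contains ``diagonal'' functionals $\sum_i \varphi_i(h)$ together with their initial-segment truncations scaled by $K^{-1}$; the truncations give the $K$-basis constant, while the full diagonals make $\bigl\|\sum_{i=1}^n x_{\alpha_i}\bigr\| \ge n$, since the root contribution of $x_{\alpha_i}$ is counted $n$ times. One then compares $v = \sum_{i=1}^n x_{\alpha_i}$ with $w = v - \sum_{i=n+1}^{2n} x_{\alpha_i}$: the root cancels in $w$, the diagonal functionals vanish on $w$, and the truncated ones contribute at most $n/K + O(1)$. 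Choosing $n$ large enough that $1/K + 1/n < 1/L$ for some $K' < L < K$ yields $\|v\| > L\|w\|$, contradicting an approximation lemma derived from $K'$-basicness. The passage from $2$ to $2n$ is exactly what absorbs the $O(1)$ boundary term coming from the cut and from the point-evaluation functionals---the term that kills your two-vector version.
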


In each case the construction is based on a single rule of multiple amalgamation of a family of finite-dimensional Banach spaces indexed by $\mathcal{F}.$ This adds not only to the clarity over the corresponding forcing constructions but it also gives us Banach spaces that could be further easily analyzed.
In fact neither the construction nor the analysis of the corresponding examples require any expertise outside the Banach space geometry.

It is interesting to compare our examples with the corresponding examples in \cite{LAT}. Given an uncountable sequence of forcing conditions, take an uncountable $\Delta$-subsequence where all conditions are isomorphic and find a condition which amalgamates finitely many of these forcing the desired inequality. Thus, the use of forcing allows us to amalgamate a posteriori since the generic filter $G$ takes care of all the possible $\Delta$-systems whose roots belong to $G$ . However in our recursive construction the amalgamations must be done a priori which limits the class of possible amalgamations. In fact since we do a single amalgamation at any given level of $\mathcal{F},$ our spaces tend to be considerably more homogeneous and therefore much easier to analyze.

In Section~\ref{e.bior} we give a proof of Theorem~\ref{theorem.1} and in Section~\ref{k.basis} we prove Theorem~\ref{theorem.2}.

\section{Preliminaries}

We use standard notation for set theory. For $\alpha\in\omega_1$ we denote the $\alpha^{th}$ ordinal and the set $\{\beta: \beta<\alpha\}$.
If $A,B\subset\omega_1$ we say $A<B$ if for all $a\in A$ and $b\in B$, $a<b$.

We follow standard notation for Banach spaces (see, for example, \cite{LTz} and \cite{HMVZ}).
In particular $c_{00}(\omega_1)$ is the vector space of functions
$x:\omega_1\rightarrow\mathbb R$ with finite support (we use $\supp(x)$ for the support of $x$)
If $F$ is a finite subset of $\omega_1$ and $h:F\rightarrow\mathbb R$, we consider the extension of $h$ in $c_{00}(\omega_1)$
to be zero outside of $F$ and still refer to it as $h$ without risk of confusion.
By $e_\alpha$ we denote the function on $\omega_1$ that takes $\alpha$ to 1 and every other $\beta\in\omega_1$ to zero.
For approximation purposes we work most of the time on $c_{00}(\omega_1,\mathbb Q)$, meaning we consider functions in $c_{00}(\omega_1)$ that only take values in $\mathbb Q$.

If $h,x\in c_{00}(\omega_1)$ we denote 
$$\langle h,x\rangle=\sum_{\alpha<\omega_1} h(\alpha)x(\alpha)$$
which is well defined because $x$ and $h$ have finite support.

We recall some notions of Banach space theory relevant for the results.

\begin{defi}
Let $\calX$ be a Banach space and $(y_\alpha,y_\alpha^*)_{\alpha<\omega_1}$ a sequence in $\calX\times\calX^*$.
For $\varepsilon\geq 0$, we say that $(y_\alpha,y_\alpha^*)_{\alpha<\omega_1}$ forms an $\varepsilon$-\emph{biorthogonal system} if
$y_\alpha^*(y_\alpha)=1$ for every $\alpha<\omega_1$, and $|y_\alpha^*(y_\beta)|\leq\varepsilon$ for every $\alpha\neq\beta$. If $\varepsilon=0$ we say $(y_\alpha)_{\alpha<\omega_1}$ forms a \emph{biorthogonal system}.
\end{defi}

A Banach space $\calX$ have the \emph{Mazur intersenction property} (MIP) if every closed convex subset of $\calX$ is the intersection of closed balls. 

The following relates the two previous concepts.

\begin{theor}[\cite{JMS}]
 Let $\calX$ be a Banach space.
\begin{enumerate}
\item If $(y_\alpha,y_\alpha^*)_{\alpha<\kappa}$ forms a biorthogonal system with $\langle y_\alpha^*:\alpha<\kappa\rangle$ dense in $\calX^*$ then $\calX$ admits an equivalent norm with the {\upshape (MIP)}.
\item If $\calX$ is nonseparable and has an equivalent norm with the {\upshape (MIP)}, then $\calX$ has an uncountable $\varepsilon$-biorthogonal system for some $0\leq\varepsilon<1$.
\end{enumerate}
\end{theor}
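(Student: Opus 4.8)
The statement is taken from \cite{JMS}; here is how I would organise a proof. Everything rests on the dual description of the {\upshape (MIP)}: a norm on $\calX$ has the {\upshape (MIP)} exactly when for every $f$ in the unit sphere $S_{\calX^*}$ of $\calX^*$ and every $\epsilon>0$ there are finitely many vectors $x_1,\dots,x_n$ in the closed unit ball of $\calX$ and a $\delta>0$ such that
$$\operatorname{diam}\bigl\{\,g\in B_{\calX^*}:\ g(x_i)\ge f(x_i)-\delta\ \text{ for }\ i=1,\dots,n\,\bigr\}<\epsilon ,$$
where $B_{\calX^*}$ is the closed unit ball of $\calX^*$; informally, these finite ``approximate faces'' of the dual ball can be made arbitrarily small in norm (see \cite[Chapter 8]{HMVZ}). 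I would deduce the two implications separately from this.

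For (1), the plan is to renorm $\calX$ so that the displayed condition holds. The assumption that $\spn\{y_\alpha^*:\alpha<\kappa\}$ is norm-dense in $\calX^*$ is precisely the input required by the classical renorming machinery that, from a biorthogonal system with linearly dense dual part, produces an equivalent dual norm on $\calX^*$ under which $B_{\calX^*}$ acquires arbitrarily small finite approximate faces: one adds to the given dual norm a Troyanski-type average, with geometric weights over the finite subsets $F\subseteq\kappa$, of the seminorms attached to the partial sums $\sum_{\alpha\in F}g(y_\alpha)\,y_\alpha^*$ (see \cite[Chapter 8]{HMVZ}). Once the renorming is in place, $\calX$ has the {\upshape (MIP)} by the characterisation above. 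The whole of the work here is internal to that construction, which I would carry out following \cite{JMS} and \cite[Chapter 8]{HMVZ}.

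For (2), I would fix an equivalent norm on $\calX$ with the {\upshape (MIP)}. Nonseparability of $\calX$ forces norm-nonseparability of $\calX^*$, so there are an $\eta>0$ and an uncountable $\eta$-separated set $\{h_\alpha:\alpha<\omega_1\}\subseteq S_{\calX^*}$; a one-line matching argument (if $\|h_\alpha+h_\beta\|<\eta/2$ and $\|h_\alpha+h_\gamma\|<\eta/2$ then $\|h_\beta-h_\gamma\|<\eta$, against $\eta$-separation) lets me also assume $\|h_\alpha+h_\beta\|\ge\eta/2$ for all $\alpha\ne\beta$. I then fix one $\epsilon$ with $0<\epsilon<\eta/8$, apply the characterisation at each $h_\alpha$ to obtain witnessing vectors $x^\alpha_1,\dots,x^\alpha_{n_\alpha}$ and a width $\delta_\alpha>0$, and pigeonhole over $\omega_1$: along some uncountable $\Gamma\subseteq\omega_1$ the integer $n_\alpha=n$, the width $\delta_\alpha=\delta$, and --- after discretising $[-1,1]$ --- the scalars $h_\alpha(x^\alpha_i)$ are the same for every $\alpha\in\Gamma$. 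For $\alpha\ne\beta$ in $\Gamma$, both $h_\alpha$ and $-h_\alpha$ must lie outside the small set attached to $h_\beta$ (otherwise $h_\beta$, which lies in that set, would be within $\epsilon$ of $h_\alpha$ or of $-h_\alpha$, against the separation); using this together with the now-uniform data I would attach to each $\alpha\in\Gamma$ a norm-one functional and an almost-norming vector, obtaining an uncountable $\varepsilon$-biorthogonal system of $\calX$ with $\varepsilon<1$ governed by $\delta$ and $\eta$. The genuine difficulty --- which I expect to be the main obstacle --- is exactly this last step: converting the ``finitely many witnessing vectors per point'' in the {\upshape (MIP)} characterisation into an honest $\varepsilon$-biorthogonal system, with one vector and one functional per index and the bound $|y^*_\alpha(y_\beta)|\le\varepsilon$ uniform over all pairs $\alpha\ne\beta$; this combinatorial extraction is the technical core of \cite{JMS}.
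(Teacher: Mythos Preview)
The paper does not prove this theorem; it is quoted from \cite{JMS} as background and no argument is given in the text. So there is no ``paper's own proof'' to compare your proposal against. Your sketch is a reasonable outline of how the result is obtained in the literature (the dual characterisation of the {\upshape (MIP)} plus a Troyanski-type renorming for (1), and a separation-plus-pigeonhole extraction for (2)), but for the purposes of this paper a bare citation to \cite{JMS} is all that is required.
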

 
In \cite{Tbaire} it is shown using $\mbox{MA}_{\omega_1}$ plus PID that every uncountable Banach space has an uncountable biorthogonal system. Recall that here $\mbox{MA}_{\omega_1}$ is the standard Baire category principle for the class of compact spaces satisfying the countable chain condition and PID\ is the P-ideal dichotomy stating that for any P-ideal $\mathcal{I}$ of countable subsets of some index set $S$, either $S$ can be partitioned into countably many subsets orthogonal to $\mathcal{I}$ or there is an uncountable subset of $S$ all of whose countable subsets belong to $\mathcal{I}$. For more about this sort of dichotomies the reader is referred to \cite{T2011}.

\begin{defi}
We say that a sequence $(y_\alpha)_{\alpha<\omega_1}$ in a Banach space $\calX$ is an uncountable $K$-basic sequence, for $K\geq 1$,
if for every $\lambda<\omega_1$ and every sequence of reals $(a_\alpha)_{\alpha<\omega_1}$ we have
$$ \left\Vert\sum_{\alpha<\lambda}a_\alpha y_\alpha\right\Vert\leq K \left\Vert\sum_{\alpha<\omega_1} a_\alpha y_\alpha\right\Vert$$
\end{defi}

If a Banach space has an uncountable $K$-basic sequence for some $K\geq 1$ it also contains an uncountable biorthogonal system.

We introduce the main tool of this paper.

\subsection{Capturing Construction Schemes}
Capturing construction schemes where introduced by Stevo Todor\v{c}evi\'{c} in \cite{todor}, where they were used to construct
a compact space with the same properties as the space of \cite{BGT}, a perfect non-metrizable compact convex set and an Asplund space
of the form $C(K)$. In section 8 of \cite{todor} a general framework to construct Banach spaces using construction schemes is introduced.
This framework, together with the forcing amalgamations of \cite{LAT}, constitute the technology behind the proofs of
Theorem~\ref{theorem.1} and Theorem~\ref{theorem.2}. 

\begin{defi}
Let $(m_k,n_k,r_k)_{k<\omega}$ be three sequences of natural numbers. We say that $(m_k,n_k,r_k)_{k<\omega}$ form a \emph{type} if
$m_0=1$, $m_{k-1}>r_k$ and $n_k>k$ for every $k>0$, for every $r<\omega$ there are infinitely many $k$'s with $r_k=r$
and for every $k>0$ we have
$$ m_k=n_k(m_{k-1}-r_k)+r_k$$
\end{defi}

\begin{defi}
Let $\calF\subset[\omega_1]^{<\omega}$, a family of finite subsets of $\omega_1$. 
We say that $\calF$ is a \emph{construction scheme of type } $(m_k,n_k,r_k)_{k}$ if we can partition $\calF=\bigcup_{k<\omega}\calF_k$ and for every $F\in\calF$ there is $R(F)$ initial segment of $F$ with the following properties:

\begin{enumerate}
\item For every $A\subset\omega_1$ finite, there is $F\in\calF$ such that $A\subset F$.
\item $\forall F\in\calF_k$, $|F|=m_k$ and $|R(F)|=r_k$.
\item For all $F,E\in\calF_k$, $E\cap F$ is an initial segment of $F$ and $E$.
\item $\forall F\in\calF_k$, there are unique $F_0,\ldots, F_{n-1}\in\calF_{k-1}$ with
\begin{equation}\label{canon.dec}  F=\bigcup_{i<n} F_i\end{equation}
 
 Furthermore $n=n_k$ and $(F_i)_{i<n_k}$ forms an increasing $\Delta$-system with root $R(F)$, i.e.,
$$	R(F)< F_0\setminus R(F) <\ldots < F_{n_k-1}\setminus R(F)$$
we call this the \emph{canonical decomposition} of $F$.
\end{enumerate}
\end{defi}

For $F\in\calF$ we call the \emph{rank} of $F$ the natural $k$ such that $F\in\calF_k$.
We assume also that $\calF_0$ are all singletones in $\omega_1$. We use the elements $F$ of $\calF$ to ``approximate'' an uncountable structure in $\omega_1$ and use \eqref{canon.dec} in the recursive construction.
For this we want all approximations of the same rank $k$ to be ``isomorphic''.

Given a type $(m_k,n_k,r_k)_{k<\omega}$ there is a construction scheme $\calF$ of this type (see proof of Theorem 4.8 of \cite{todor}).

Throughout the rest of the paper we use $k$ for the rank of $F\in\calF$ and $m_k, n_k$ and $r_k$ as above and omit reference to the type of a construction scheme.

If $(D_\alpha:\alpha<\omega_1)$ is a sequence of finite subsets of $\omega_1$, we say it is a $\Delta$-system with root $R$ if
for every $\alpha<\beta<\omega_1$ we have $R=D_\alpha\cap D_\beta$ and $R<D_\alpha\setminus R< D_\beta\setminus R$.

Let $F\in\calF$ and $F=\bigcup_{i<n_k}F_i$ be the canonical decomposition of $F$.
Then, all $F_i$'s have the same size, and we denote $\varphi_i:F_0\rightarrow F_i$ the unique increasing bijection between $F_0$ and $F_i$.
If $f$ is a map on $F_0$, we define the map $\varphi_i(f)$ on $F_i$ as $f\circ\varphi_i^{-1}$. 

\begin{defi}
A construction scheme $\calF$ is \emph{capturing} if for every $n<\omega$ and every uncountable 
$\Delta$-System $(D_\alpha:\alpha<\omega_1)$ with root $R$,
we can find $F\in\calF$ and $\alpha_0<\ldots<\alpha_{n-1}$ such that 
\begin{gather*}
R\subset R(F)\\
D_{\alpha_i} \subset F_i \quad\mbox{for all }i<n.\\
\varphi_i(D_0)=D_i\quad\mbox{for all }i<n.
\end{gather*}
where $(F_i)_{i<n_k}$ is the canonical decomposition of $F$. In particular $n>n_k$.
\end{defi}

The existence of capturing construction schemes follows from $\diamondsuit$ (Theorem 4.8 of \cite{todor})
By Theorem~\ref{theorem.1} and a result of \cite{Tbaire} we obtain the following

\begin{coro}[$\mbox{MA}_{\omega_1}+\mbox{PID}$]
There are no capturing construction schemes.
\end{coro}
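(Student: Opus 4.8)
The plan is to derive this from Theorem~\ref{theorem.1} by contradiction, after one preliminary observation: the proof of Theorem~\ref{theorem.1} invokes $\diamondsuit$ only to manufacture a capturing construction scheme, and that is unavoidable because $\mbox{MA}_{\omega_1}$ contradicts CH and hence $\diamondsuit$. So the first step is to extract from Section~\ref{e.bior} the conditional statement that the recursive amalgamation really proves: \emph{if there exists a capturing construction scheme, then for every $\varepsilon\in(0,1)\cap\mathbb Q$ there is a nonseparable Banach space $\Xe$ carrying an uncountable $\varepsilon$-biorthogonal system but no uncountable $\tau$-biorthogonal system for every $0\le\tau<\varepsilon(1+\varepsilon)^{-1}$.} The point is that the construction of $\Xe$ and the verification of its properties use the fixed scheme $\calF$ solely through its capturing property, with no further recourse to $\diamondsuit$ or CH.

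Granting this, the argument is short. Assume $\mbox{MA}_{\omega_1}+\mbox{PID}$ and suppose, toward a contradiction, that a capturing construction scheme $\calF$ exists. Fix any $\varepsilon\in(0,1)\cap\mathbb Q$, say $\varepsilon=\tfrac12$, and let $\Xe$ be the space built from $\calF$ by the construction of Section~\ref{e.bior}. Since $\varepsilon(1+\varepsilon)^{-1}>0$, taking $\tau=0$ shows that $\Xe$ has no uncountable biorthogonal system, while $\Xe$ is nonseparable (this will be clear from the construction, which produces a space of density character $\omega_1$). But the theorem of Todor\v{c}evi\'c from \cite{Tbaire}, recalled in the Preliminaries, asserts that under $\mbox{MA}_{\omega_1}+\mbox{PID}$ every nonseparable Banach space has an uncountable biorthogonal system. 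This contradicts the previous sentence, so no capturing construction scheme can exist.

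The only genuinely non-formal ingredient is the bookkeeping behind the first paragraph: one must check by going through Section~\ref{e.bior} that neither the amalgamation nor the analysis of $\Xe$ secretly uses an instance of $\diamondsuit$ or CH beyond the existence of the capturing scheme, and that $\Xe$ indeed has density $\omega_1$. Everything else is a formal combination of the conditional form of Theorem~\ref{theorem.1} with the cited Baire-category result.
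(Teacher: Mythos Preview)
Your argument is correct and matches the paper's own reasoning: the corollary is stated immediately after noting that capturing schemes exist under $\diamondsuit$, with the one-line justification ``By Theorem~\ref{theorem.1} and a result of \cite{Tbaire},'' which is exactly the combination you spell out. Your observation that $\diamondsuit$ is used in Theorem~\ref{theorem.1} only to produce the capturing scheme is the key point, and the paper leaves it implicit.
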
 

It is shown in \cite{LT} that there is a Souslin tree if there is a 3-capturing construction scheme.

\begin{coro}[MA$_{\omega_1}$]
There are no 3-capturing construction schemes.
\end{coro}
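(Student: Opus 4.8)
The plan is to combine the result of \cite{LT} quoted just above with the classical theorem of Solovay and Tennenbaum that $\mbox{MA}_{\omega_1}$ implies Souslin's Hypothesis, i.e.\ that there are no Souslin trees. (The formulation of $\mbox{MA}_{\omega_1}$ recalled above, as a Baire category statement for ccc compacta, is equivalent to the usual poset formulation, so the Solovay--Tennenbaum argument applies verbatim.)

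First I would recall why $\mbox{MA}_{\omega_1}$ excludes Souslin trees. After the standard normalization one may assume a Souslin tree $T$ is of height $\omega_1$, with all levels countable, and such that every node has successors at every higher level. Ordering $T$ so that higher nodes are the stronger conditions, $T$ has no uncountable antichain and hence is ccc, while for each $\alpha<\omega_1$ the set $D_\alpha=\{t\in T:\operatorname{ht}(t)\geq\alpha\}$ is dense. Applying $\mbox{MA}_{\omega_1}$ to the family $\{D_\alpha:\alpha<\omega_1\}$ produces a filter meeting every $D_\alpha$; such a filter is linearly ordered in $T$ and is uncountable, contradicting that $T$ has no uncountable chain.

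Then, assuming toward a contradiction that $\calF$ is a $3$-capturing construction scheme, the theorem of \cite{LT} cited above provides a Souslin tree, contradicting the previous paragraph. Hence under $\mbox{MA}_{\omega_1}$ there is no $3$-capturing construction scheme.

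I do not expect a genuine obstacle here: the entire combinatorial weight — extracting a Souslin tree from a $3$-capturing scheme — is already carried by \cite{LT}, and the implication $\mbox{MA}_{\omega_1}\Rightarrow$ no Souslin trees is textbook. The only mild points of care are the passage to a normalized Souslin tree so that the sets $D_\alpha$ are honestly dense and the observation that a filter in a tree order is a chain; both are routine.
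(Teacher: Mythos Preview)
Your proposal is correct and follows exactly the paper's approach: the corollary is stated immediately after the sentence citing \cite{LT} and is meant to follow from that result together with the classical fact that $\mathrm{MA}_{\omega_1}$ rules out Souslin trees. Your write-up is in fact more detailed than the paper, which gives no explicit proof at all.
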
 

We give an overview of the proof of Theorem~\ref{theorem.1} and Theorem~\ref{theorem.2} (see also Section 8 of \cite{todor}).

\subsection{Overview of proof}

The construction of the Banach spaces $\Xe$ and $\Xk$ will follow an abstract approach for producing nonseparable Banach structures. 

We start with a capturing construction scheme $\calF$.
First, we construct (recursively) a family $\calH=\bigcup_{F\in\calF}\calH_F$ where $\calH_F$ are functions $f:F\rightarrow [0,1]\cap\mathbb Q$.
For $\Xe$ we will have $\calH_F=\{h_\alpha^F:\alpha\in F\}$. To guarantee nonseparability we want to have the following condition
\begin{equation}\label{nonseparability}
 h_\alpha^F\restriction\alpha=0\qquad h_\alpha^F(\alpha)=1
\end{equation}

The role of $\calH$ is to be a norming set, for that we need the following coherence conditions
\begin{gather}\label{coherence.1}
\forall F,E\in\calF\quad\mbox{if $E\subset F$ then}\quad h_\alpha^F\restriction E=h_\alpha^F\quad\forall\alpha\in E \\ 
\label{coherence.2} \forall F,E\in\calF\quad\mbox{if $E\subset F$ then}\quad f\restriction E\in\conv(\pm\calH_E)\quad\forall f\in\calH_F
\end{gather}
Let $\calH_k=\bigcup_{i<k,F\in\calF_i}\calH_F$.
Suppose $\calH_k$ has been defined and $F\in\calF$ has rank $k$. Let $F=\bigcup_{i<n_k}F_i$ the the canonical decomposition of F.
We will define $\calH_F$ by amalgamating the elements of $\calH_{F_i} (i<n_k)$
in such a way that \eqref{nonseparability}, \eqref{coherence.1} and \eqref{coherence.2} holds for $\Xe$ and \eqref{coherence.2} for $\Xk$.

This concludes the construction of $\calH$. Next, we will define $\Vert\cdot\Vert$ in $c_{00}(\omega_1)$
\begin{equation}\label{norm}
\Vert x\Vert=\max\{|\langle f, x\rangle|: f\in\calH\}
\end{equation} 

Note that $\Vert\cdot\Vert$ is well defined by \eqref{coherence.1} and \eqref{coherence.2} and by \eqref{nonseparability}
we have $\Vert x\Vert =0$ if and only if $x=0$ (this for the construction of $\Xe$, for $\Xk$ the vectors $e_\alpha\in\calH$ for every $\alpha<\omega_1$) so it defines a norm on $c_{00}(\omega_1)$.
The respective Banach space $\calX$ will be the completion of $(c_{00}(\omega_1),\Vert\cdot\Vert)$.

To prove that $\calX$ has indeed the properties that we want we will use the capturing of $\calF$.
Arguing by contradiction we take an uncountable sequence $(y_\alpha)_{\alpha<\omega_1}$ in $\calX$ with a certain property.
We show (following \cite{LAT}) that there is an inequality that uncountably many $y_\alpha$'s satisfy.

Take $(x_\alpha)_{\alpha<\omega_1}$ in $c_{00}(\omega_1,\mathbb Q)$ approximating the $y_\alpha$'s
and apply the $\Delta$-System lemma and a counting argument (this is why we take $\mathbb Q$ instead of $\mathbb R$)
to obtain $\Gamma\subset\omega_1$ uncountable such that
\begin{enumerate}
\item $(\supp(x_\alpha): \alpha\in\Gamma)$ forms a $\Delta$-System and
\item the $x_\alpha$'s are ``isomorphic'' in some manner.
\end{enumerate}

Finding $F\in\calF$ capturing enough $x_\alpha$'s we can construct vectors that contradict the inequality.

\section{Proof of Theorem \ref{theorem.1}}\label{e.bior}

Let $\calF$ be a capturing construction scheme and $0<\varepsilon<1$ rational.
$\calH_1$ is form by $h_\alpha^{\{\alpha\}}$ taking values in $\{\alpha\}$ and sending $\alpha\mapsto 1$.

Suppose $\calH_k$ has been built satisfying \eqref{nonseparability}, \eqref{coherence.1} and \eqref{coherence.2}.
Let $F\in\calF_k$ and $F=\bigcup_{i<n_k}F_i$ the canonical decomposition of $F$.
Then, we let $\calH_F=\{h_\alpha^F:\alpha\in F\}$ where $h_\alpha^F$ is define in the following way
\begin{enumerate}
\item For $\alpha\in R$, define $h_\alpha^F := h_0^{F_0}+\sum_{0<i<n_k}\varphi_i(h_\alpha^{F_0})\restriction (F_i\setminus F_0) $.
\item For $\alpha\in F_0\setminus R$, define 
                $$h_\alpha^F := h_\alpha^{F_0}+
                                         \varepsilon\sum_{2\leq i<n_k}(-1)^i\varphi_i(h_\alpha^{F_0})\restriction (F_i\setminus F_0). $$
\item For $\delta\in F_1\setminus R$, and $\alpha\in F_0\setminus R$ with $\varphi_1(\alpha)=\delta$, define 
                $$h_\delta^F :=\varphi_1(h_\alpha^{F_0})+
                                                         \varepsilon\sum_{2\leq i<n_k}(-1)^{i+1}\varphi_{i}(h_\alpha^{F_0})\restriction (F_i\setminus F_0).$$
\item For $\alpha\in F_j\setminus R$ with $2\leq j<n_k$, define $h_\alpha^F=h_\alpha^{F_j}$.
\end{enumerate}

It is clear that $\calH_{k+1}$ satisfies \eqref{nonseparability} and \eqref{coherence.1}.
Note that if $E\in\calF$ is contained in $F$ and $\alpha\in F$, there is $f\in\calH_E$ such that 
 $h_\alpha^F(\gamma)$ equals either $f(\gamma)$ or $\varepsilon f(\gamma)$ for every $\gamma\in E$.
This shows that \eqref{coherence.2} holds for $\calH_{k+1}$.
The same observation shows \begin{equation}\label{ebiorthogonal}
|h_\alpha^F(e_\beta)|\leq\varepsilon
\end{equation} for all $\alpha\neq\beta$ in $F$. 

This finishes the construction of $\calH$.

Define the norm $\Vert\cdot\Vert_\varepsilon$ as in \eqref{norm} and
let $\Xe$ be the completion of $(c_{00}(\omega_1),\Vert\cdot\Vert_\varepsilon)$. 

We check that $\Xe$ is as we wanted.
Define $h_\alpha$ to be the union of all $(h_\alpha^F:F\in\calF)$ which is well defined by \eqref{coherence.1}.
By \eqref{ebiorthogonal} the sequence $(e_\alpha,h_\alpha)_{\alpha<\omega_1}$ forms an un uncountable $\varepsilon$-biorthogonal system.

Suppose $(y_\alpha,y^*_\alpha)_{\alpha<\omega_1}$ is a $\tau$-biorthogonal system for
$0\leq\tau<\frac{\varepsilon}{1+\varepsilon}$. We can assume that the $y_\alpha$'s are normalized.

\begin{lemma}\label{lem1}
                There is $\Gamma\subset\omega_1$ uncountable and $\delta>0$ such that, for every $n,m<\omega$ with
                $\frac{m}{2n}=\varepsilon$ and every $\alpha_0<\ldots<\alpha_{2n+1}$ we have,
                \begin{equation}\label{condition}
                \left\Vert (y_{\alpha_0}-y_{\alpha_1})-\frac{1}{m}\sum_{i=1}^n(y_{\alpha_{2i}}-y_{\alpha_{2i+1}})\right\Vert_\varepsilon\geq\delta
                \end{equation}
\end{lemma}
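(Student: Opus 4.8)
The plan is to extract, from the hypothetical uncountable $\tau$-biorthogonal system, an uncountable subset on which the $y_\alpha$'s are quantitatively indistinguishable, and then read off inequality \eqref{condition} from the single functional $y_{\alpha_0}^*$ evaluated on the displayed combination. First I would normalize and pass to a subsequence so that the functionals are uniformly bounded: since $\|y_\alpha\|=1$ and $y_\alpha^*(y_\alpha)=1$ we have $\|y_\alpha^*\|\geq 1$, and by passing to an uncountable subset we may assume $\|y_\alpha^*\|\leq C$ for a fixed constant $C$. Now for any $\alpha_0<\alpha_1<\cdots<\alpha_{2n+1}$, apply $y_{\alpha_0}^*$ to $(y_{\alpha_0}-y_{\alpha_1})-\frac1m\sum_{i=1}^n(y_{\alpha_{2i}}-y_{\alpha_{2i+1}})$. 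The first term contributes $y_{\alpha_0}^*(y_{\alpha_0})-y_{\alpha_0}^*(y_{\alpha_1})=1-y_{\alpha_0}^*(y_{\alpha_1})$, which is at least $1-\tau$ in absolute value (up to sign); each of the remaining $2n$ terms is bounded by $\tau$ in absolute value, so the whole sum is at least $1-\tau-\frac1m\cdot 2n\tau = 1-\tau-\frac{2n}{m}\tau = 1-\tau-\frac{\tau}{\varepsilon}$ in absolute value, using $\frac{m}{2n}=\varepsilon$. Since $\|y_{\alpha_0}^*\|\leq C$, this forces
$$\left\Vert (y_{\alpha_0}-y_{\alpha_1})-\frac{1}{m}\sum_{i=1}^n(y_{\alpha_{2i}}-y_{\alpha_{2i+1}})\right\Vert_\varepsilon\geq \frac{1}{C}\left(1-\tau-\frac{\tau}{\varepsilon}\right).$$

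The key point is that $\tau<\frac{\varepsilon}{1+\varepsilon}$ is exactly the condition that makes $1-\tau-\frac{\tau}{\varepsilon}=1-\tau\frac{1+\varepsilon}{\varepsilon}>0$, so setting $\delta=\frac{1}{C}\left(1-\tau\frac{1+\varepsilon}{\varepsilon}\right)>0$ gives the conclusion, with $\Gamma$ the uncountable set obtained above. I should double-check the sign bookkeeping: if $y_{\alpha_0}^*(y_{\alpha_1})$ happens to be close to $+\tau$ the first term could be as small as $1-\tau$, and the tail could conspire to subtract another $\frac{\tau}{\varepsilon}$, so the worst case is indeed $1-\tau-\frac{\tau}{\varepsilon}$; the absolute values never help the adversary beyond this, so the bound is correct.

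The main obstacle is really just the uniform bound on $\|y_\alpha^*\|$: a priori the functionals in a $\tau$-biorthogonal system need not have bounded norm, but an uncountable pigeonhole on $\lceil\|y_\alpha^*\|\rceil$ immediately repairs this, since $\omega_1$ is not a countable union of countable sets — so we may fix $C$ with $\|y_\alpha^*\|\leq C$ for all $\alpha$ in an uncountable $\Gamma$. Everything else is the elementary estimate above; no capturing or construction-scheme machinery is needed for this lemma, which is purely a statement about $\tau$-biorthogonal systems in an arbitrary Banach space. (The capturing of $\calF$ and the specific structure of $\|\cdot\|_\varepsilon$ enter only at the next stage, where one shows this inequality \eqref{condition} fails for a suitable tuple captured by some $F\in\calF$, yielding the contradiction.)
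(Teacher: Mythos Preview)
Your proof is correct and essentially identical to the paper's: pass to an uncountable $\Gamma$ on which $\|y_\alpha^*\|\leq C$ by pigeonhole, evaluate a single functional (the paper uses $y_{\alpha_1}^*$, you use $y_{\alpha_0}^*$, which is immaterial) on the displayed combination, and read off $\delta=\frac{1}{C}\bigl(1-\tau\frac{1+\varepsilon}{\varepsilon}\bigr)>0$ from the $\tau$-biorthogonality bounds. Your write-up is in fact slightly more careful than the paper's, since you spell out both the pigeonhole step and the worst-case sign analysis.
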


\begin{proof}
Let $N<\omega$ and $\Gamma\subset\omega_1$ uncountable such that
$$ \sup_{\alpha\in\Gamma}\Vert y^*_\alpha\Vert\leq N $$
Then 
{\footnotesize\begin{align*}
\left\Vert (y_{\alpha_0}-y_{\alpha_1})-\frac{1}{m}\sum_{i=1}^n(y_{\alpha_{2i}}-y_{\alpha_{2i+1}})\right\Vert_\varepsilon
                &\geq\left|\frac{f_{\alpha_1}}{N}\left((y_{\alpha_0}-y_{\alpha_1})
                -\frac{1}{m}\sum_{i=1}^n(y_{\alpha_{2i}}-y_{\alpha_{2i+1}})\right)\right| \\
                &\geq\frac{1}{N}\Bigl(1-\tau-\frac{1}{m}(2n\tau)\Bigr)=\frac{1}{N}\Bigl(1-\tau(1+\frac{2n}{m})\Bigr)
\end{align*}}
Taking $\delta=\frac{1}{N}(1-\tau(1+\frac{2n}{m}))=\frac{1}{N}(1-\tau\frac{1+\varepsilon}{\varepsilon})>0$ we obtain the result.
\end{proof}

Theorem~\ref{theorem.1} follows if we show that

\begin{lemma}\label{lem2}
              For every normalized $(y_\alpha)_{\alpha\in\Gamma}$ in $\Xe$, there is $m,n<\omega$ with
                                                        $\frac{m}{2n}=\varepsilon$ and $\alpha_0<\ldots<\alpha_{2n+1}$ such that
                                                        $$
                                                        \left\Vert (y_{\alpha_0}-y_{\alpha_1})-\frac{1}{m}\sum_{i=1}^n(y_{\alpha_{2i}}-y_{\alpha_{2i+1}})\right\Vert_\varepsilon<\delta
                                                        $$
\end{lemma}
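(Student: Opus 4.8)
The plan is to derive a contradiction by using the capturing property of $\calF$ to produce, from the uncountable family $(y_\alpha)_{\alpha\in\Gamma}$, a concrete finite configuration witnessing the strict inequality. First I would fix a small rational $\eta>0$ (to be calibrated against $\delta$ and $\varepsilon$) and, for each $\alpha\in\Gamma$, choose $x_\alpha\in c_{00}(\omega_1,\mathbb Q)$ with $\Vert y_\alpha-x_\alpha\Vert_\varepsilon<\eta$. Applying the $\Delta$-system lemma together with a counting argument over $\mathbb Q$ (as sketched in the Overview), I would thin $\Gamma$ to an uncountable $\Gamma'$ on which the supports $(\supp(x_\alpha):\alpha\in\Gamma')$ form a $\Delta$-system with some root $S$, and on which the $x_\alpha$ are ``isomorphic'': the order-preserving bijection between $\supp(x_\alpha)$ and $\supp(x_\beta)$ fixing $S$ carries $x_\alpha$ to $x_\beta$, and moreover (after further thinning) each $x_\alpha\restriction S$ is the same rational vector.

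Next I would pick the parameters: since $\varepsilon$ is rational, write $\varepsilon=p/q$ and set $n=p$, $m=2pq/\gcd(\ldots)$ — concretely choose $n,m$ with $m/(2n)=\varepsilon$, so that the target vector is $z:=(e_{\text{of }y_{\alpha_0}}-\cdots)$ built from $2n+2$ indices. Then I would invoke capturing with $N=2n+2$ (the number of $\Delta$-system members we need to capture) on the $\Delta$-system $(\supp(x_\alpha):\alpha\in\Gamma')$: this yields $F\in\calF$ of some rank $k$ with canonical decomposition $F=\bigcup_{i<n_k}F_i$, and ordinals $\beta_0<\cdots<\beta_{2n+1}$ in $\Gamma'$ with $S\subset R(F)$, $\supp(x_{\beta_i})\subset F_i$, and $\varphi_i(\supp(x_{\beta_0}))=\supp(x_{\beta_i})$, so in fact $\varphi_i(x_{\beta_0})=x_{\beta_i}$. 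Relabel $\alpha_i:=\beta_i$.

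The heart of the argument is then to estimate $\bigl\Vert (x_{\alpha_0}-x_{\alpha_1})-\frac1m\sum_{i=1}^n(x_{\alpha_{2i}}-x_{\alpha_{2i+1}})\bigr\Vert_\varepsilon$ and show it is small — smaller than $\delta-(\text{error from }\eta)$. To do this I would test against an arbitrary $h\in\calH$; by coherence \eqref{coherence.1} and \eqref{coherence.2} it suffices to bound $|\langle h, \cdot\rangle|$ for $h=h_\gamma^{F'}$ with $F\subset F'$ of rank $k+1$, and by \eqref{coherence.1} again one reduces to $h\in\calH_F$ of the four types (1)--(4) in the construction at $F$. The key structural point is that the coefficients $\varepsilon(-1)^i$ versus $\varepsilon(-1)^{i+1}$ appearing in types (2) and (3) were chosen precisely so that, when $h$ is of type (2) or (3), the contributions of $x_{\alpha_0}$ on $F_0$ and of $x_{\alpha_1}$ on $F_1$ (which equals $\varphi_1(x_{\alpha_0})$) combine with the $\varepsilon$-scaled reflections on $F_i$ ($2\le i<n_k$) to telescope against the averaged tail $\frac1m\sum_{i=1}^n(x_{\alpha_{2i}}-x_{\alpha_{2i+1}})$; for $h$ of type (1) or (4) the support pattern forces $\langle h,x_{\alpha_j}\rangle$ to agree across the relevant indices and cancel in the difference. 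The arithmetic $m/(2n)=\varepsilon$ is exactly what makes the average weight $1/m$ match the $2n$ many $\varepsilon$-weighted copies.

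The main obstacle I expect is the bookkeeping in that last estimate: one must carefully match which $\alpha_j$ lands in which block $F_{i_j}$ under the capturing, track how $h_\gamma^F$ restricts to each $F_{i_j}$ via $\varphi_{i_j}$, and verify that the alternating signs in (2) and (3) together with the uniform $1/m$ averaging produce cancellation down to something of size $O(\eta)$ rather than $O(1)$ — in particular handling the ``diagonal'' terms where $h=h_\gamma^F$ has $\gamma$ in the same block as some $\supp(x_{\alpha_j})$, where the value can be as large as $\sup_\alpha\Vert x_\alpha\Vert_\varepsilon$ times a coefficient, and checking these still cancel in the prescribed linear combination. Once that bound, say $<\delta/2$, is in hand, choosing $\eta$ small enough that the $x_\alpha\to y_\alpha$ approximation error in the same linear combination (whose $\ell_1$-coefficient sum is $2+2n/m=2+1/\varepsilon$, a fixed constant) is $<\delta/2$, the triangle inequality gives the strict inequality $<\delta$ for the $y_{\alpha_i}$, contradicting Lemma~\ref{lem1}.
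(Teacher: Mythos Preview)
Your outline follows the paper's proof closely and is largely correct, but there is one concrete gap in the case analysis that propagates into a wrong order of parameter choices.

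You write that for $h$ of type (4) ``the support pattern forces $\langle h,x_{\alpha_j}\rangle$ to agree across the relevant indices and cancel in the difference.'' This is false. A type~(4) function is $h_\alpha^F=h_\alpha^{F_j}$ for some $j\ge 2$, supported entirely in $F_j$. Since $w\restriction R(F)=0$ and the captured supports $D_{\alpha_i}\subset F_i$ are disjoint off $R(F)$, such an $h$ sees exactly one summand of $w$, namely $\pm\frac{1}{m}x_{\alpha_j}$, and there is nothing for it to cancel against. One only gets
\[
|\langle h,w\rangle|\;=\;\tfrac{1}{m}\,\bigl|\langle h_\alpha^{F_j},x_{\alpha_j}\rangle\bigr|\;\le\;\tfrac{1}{m},
\]
which is \emph{not} $O(\eta)$ and does not shrink by taking better rational approximations.

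Because of this, your parameter choice is in the wrong order. You fix $n,m$ to be (essentially) the minimal pair with $m/(2n)=\varepsilon$, independently of $\delta$, and expect the $x$-combination to have norm $<\delta/2$. But that norm is of order $1/m$, so for small $\delta$ the inequality fails. The paper first chooses $m$ large enough that $1/m<\delta/2$ (and $n$ correspondingly so that $m/(2n)=\varepsilon$), and only then picks the approximants $x_\alpha$ with $\Vert y_\alpha-x_\alpha\Vert_\varepsilon<\delta/(4(n+1))$. With that adjustment your sketch goes through: types (1), (2), (3) give exactly $0$ via the sign pattern and the identity $\varepsilon\cdot 2n/m=1$ (your description of that cancellation is right), type (4) gives at most $1/m<\delta/2$, and the approximation error contributes the remaining $\delta/2$.
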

\begin{proof}
Let $m$ and $n$, big enough so that $1/m<\delta/2$ and $m/2n=\varepsilon$.

Let $x_\alpha\in c_{00}(\omega_1,\mathbb Q)$ for $\alpha\in\Gamma$ normalized such that
$$\Vert y_\alpha-x_\alpha\Vert_\varepsilon<\frac{\delta}{4(n+1)}\quad\mbox{for every }\alpha\in\Gamma.$$

Note that
\begin{align*}
\left\Vert (y_{\alpha_0}-y_{\alpha_1})-\frac{1}{m}\sum_{i=1}^n  (y_{\alpha_{2i}}-y_{\alpha_{2i+1}})\right\Vert_\varepsilon
\leq\qquad\qquad\qquad& \\
\leq\left\Vert (x_{\alpha_0}-x_{\alpha_1})-\frac{1}{m}\sum_{i=1}^n(x_{\alpha_{2i}}-x_{\alpha_{2i+1}})\right\Vert_\varepsilon
&+\sum_{i=0}^{2n+1}\Vert y_\alpha-x_\alpha\Vert_\varepsilon \\
\leq \left\Vert (x_{\alpha_0}-x_{\alpha_1})-\frac{1}{m}\sum_{i=1}^n(x_{\alpha_{2i}}-x_{\alpha_{2i+1}})\right\Vert_\varepsilon
&+\frac{\delta}{2}
\end{align*}
thus, it is enough to find $\alpha_0<\alpha_1<\ldots<\alpha_{2n+1}$ in $\Gamma$ such that 

\begin{equation}\label{eq.1}
\left\Vert (x_{\alpha_0}-x_{\alpha_1})-\frac{1}{m}\sum_{i=1}^n(x_{\alpha_{2i}}-x_{\alpha_{2i+1}})\right\Vert_\varepsilon<\frac{\delta}{2}
\end{equation}

Apply the $\Delta$-System lemma and a counting argument to find $\Gamma_0\subset\Gamma$ uncountable such that

\begin{enumerate}
\item Let $D_\alpha=\supp(x_\alpha)$, then the collection $(D_\alpha:\alpha\in\Gamma_0)$ form a $\Delta$-System with
       $|D_\alpha|=|D_\beta|=d$ for every $\alpha,\beta\in\Gamma_0$.
\item For $\alpha,\beta\in\Gamma_0$ and $\varphi_{\alpha,\beta}:D_\alpha\rightarrow D_\beta$ an increasing bijection then
       $x_\beta=\varphi_{\alpha,\beta}(x_\alpha)$.
\end{enumerate}

Since $\calF$ is capturing there is $F\in\calF$ and some $\alpha_0<\ldots<\alpha_{2n+1}$ in $\Gamma_0$, such that $F$ captures 
$(D_{\alpha_i}: i\leq 2n+1)$. Let 

$$ w=  (x_{\alpha_0}-x_{\alpha_1})-\frac{1}{m}\sum_{i=1}^n(x_{\alpha_{2i}}-x_{\alpha_{2i+1}})$$

Note that $w\restriction R(F)$ is identically zero. We show that $\Vert w\Vert <\delta/2$. Let $f\in\calH_F$. 

If $f$ is of the form (1) it is clear that $\langle f,w\rangle =0$.

If $f$ is of the form (2) then $f=h_\alpha^F$ for some $\alpha\in F$ and
\begin{align*}
\langle f,w\rangle=h_\alpha^{F_0}(x_{\alpha_0})-\frac{\varepsilon}{m}\sum_{i=2}^n(h_\alpha^{F_0}(x_{\alpha_0})+h_\alpha^{F_0}(x_{\alpha_0}))
=h_\alpha^{F_0}(x_{\alpha_0})\Bigl(1-\varepsilon\frac{2n}{m}\Bigr) =0
\end{align*}
because the amalgamation for $f$ nullifies the term in $\alpha_1$ and changes the sign of the other odd terms.

If $f$ is of the form (3) then
\begin{align*}
\langle f,w\rangle=-h_\alpha^{F_1}(x_{\alpha_1})+\frac{\varepsilon}{m}\sum_{i=2}^n(h_\alpha^{F_1}(x_{\alpha_1})+h_\alpha^{F_1}(x_{\alpha_1}))= h_\alpha^{F_1}(x_{\alpha_1})\Bigl(\varepsilon\frac{2n}{m}-1\Bigr) =0
\end{align*}
because the amalgamation for $f$ nullifies the term in $\alpha_0$ and changes the sign the other even terms

Finally if $f$ is of the form (4) then 
$|\langle f,w\rangle|=|\frac{1}{m}\langle h_\alpha^{F_j},\varphi_{\alpha_j}(z)\rangle|\leq\frac{1}{m}<\delta/2$
as we wanted to show. Thus, $w$ witnesses \eqref{eq.1} contradicting Lemma~\ref{lem1} and finishing the proof.
\end{proof}

 
\section{Proof of Theorem \ref{theorem.2}}\label{k.basis}

We construct $\calH$ by recursion. For $\Xk$ the collection $\calH_F$ will have the following \emph{closure property}:
\begin{equation}\label{closureprop}
\forall f\in\calH_F,\ \delta\in F\quad \lambda^{-1}(f\restriction\delta)\in\calH_F
\end{equation}

Let $\calF$ be a capturing construction scheme and let $K>1$.
$\calH_1$ is form of functions of the form $K^{-n}e_\alpha$ for every $\alpha<\omega_1$ and $n<\omega$.

Suppose $\calH_k$ has been constructed satisfying \eqref{closureprop} and \eqref{coherence.2}.
Let $F\in\calF_k$ and $F=\bigcup_{i<n_k}F_i$ 
be the canonical decomposition of $F$. Then, we let $\calH_F$ be the collection of functions of the following type:
\begin{enumerate}
\item $e_\alpha$, for $\alpha\in F$.
\item $\sum_{i<n_k}\varphi_i(f)\restriction (F_i\setminus F_0)$ for every $f\in \calH_{F_0}$.
\item $\frac{1}{K^n}\left(\sum_{i<n_k}\varphi_i(f)\restriction (F_i\setminus F_0)\right)\restriction\delta$ for every $f\in\calH_{F_0}$, every $\delta\in F$ and $n=1,2\ldots$
\end{enumerate}

It is clear that \eqref{closureprop} and \eqref{coherence.2} holds for $\calH_{k+1}$. This finishes the construction of $\calH$.

Define $\Vert\cdot\Vert_K$ as in \eqref{norm} and let $\Xk$ be the completion of $(c_{00}(\omega_1),\Vert\cdot\Vert_K)$.

We see that $\Xk$ is as we wanted. We first show that $\Xk$ has an uncountable $K$-basic sequence. Let $(e_\alpha)_{\alpha<\omega_1}$ be the canonical unit vector basis.

\begin{lemma}
The vectors $(e_\alpha)_{\alpha<\omega_1}$ form a normalized $K$-basis of $\Xk$. In particular $\Xk$ is not separable.
\end{lemma}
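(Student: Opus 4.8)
The plan is to verify the two defining inequalities of an uncountable $K$-basic sequence directly from the norm formula \eqref{norm} and the structure of $\calH$. Since $\Vert\cdot\Vert_K$ is the sup of $|\langle f,\cdot\rangle|$ over $f\in\calH$, and every $e_\alpha\in\calH$ (from type (1)), we immediately get $\Vert e_\alpha\Vert_K\geq\langle e_\alpha,e_\alpha\rangle=1$; the reverse bound $\Vert e_\alpha\Vert_K\leq 1$ follows because every $f\in\calH$ takes values in $[-1,1]$ (this is preserved by the three amalgamation rules, since $K^{-n}\leq 1$ and restrictions only shrink support), so $(e_\alpha)_{\alpha<\omega_1}$ is normalized. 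For the $K$-basis inequality I must show that for all $\lambda<\omega_1$ and all scalars $(a_\alpha)$,
$$\Bigl\Vert\sum_{\alpha<\lambda}a_\alpha e_\alpha\Bigr\Vert_K\leq K\Bigl\Vert\sum_{\alpha<\omega_1}a_\alpha e_\alpha\Bigr\Vert_K.$$
Write $x=\sum_{\alpha<\omega_1}a_\alpha e_\alpha$ and $P_\lambda x=\sum_{\alpha<\lambda}a_\alpha e_\alpha=x\restriction\lambda$. I need: for every $f\in\calH$ there is $g\in\calH$ with $|\langle f, P_\lambda x\rangle|\leq K\,|\langle g,x\rangle|$. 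The natural candidate is $g=K\cdot(f\restriction\lambda)$ whenever that lies in $\calH$ — and this is exactly what the closure property \eqref{closureprop} guarantees: $\lambda^{-1}(f\restriction\delta)\in\calH_F$ for the relevant $\delta$, hence scaling by $K$ is legitimate. Then $\langle f,P_\lambda x\rangle=\langle f\restriction\lambda, x\rangle=K^{-1}\langle g,x\rangle$, giving the bound.

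Concretely I would argue as follows. Fix $x\in c_{00}(\omega_1)$ and $\lambda<\omega_1$; it suffices to prove the inequality for such finitely supported $x$, since both sides are continuous in $x$ and $c_{00}(\omega_1)$ is dense. Pick $f\in\calH$ with $\langle f,P_\lambda x\rangle=\Vert P_\lambda x\Vert_K$; since $f=\bigcup_F h^F$-type data, there is a single $F\in\calF$ with $\supp(f)\cup\supp(x)\subset F$ (using property (1) of a construction scheme), so we may take $f\in\calH_F$. Using \eqref{closureprop} applied inside $\calH_F$ with $\delta$ chosen so that $\supp(f)\cap\lambda=\supp(f)\cap\delta$ (e.g. $\delta$ the least element of $F$ that is $\geq\lambda$, or $\delta=\max F+1$ if $\lambda>\max F$; one checks $f\restriction\lambda=f\restriction\delta$ in the relevant case), the function $g:=K(f\restriction\delta)=K(f\restriction\lambda)$ belongs to $\calH_F\subset\calH$. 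Hence
$$\Vert P_\lambda x\Vert_K=\langle f,P_\lambda x\rangle=\langle f\restriction\lambda,\,x\rangle=\frac1K\langle g,x\rangle\leq\frac1K\Vert x\Vert_K,$$
which is the desired inequality. The nonseparability of $\Xk$ then follows since any normalized uncountable basic sequence spans a nonseparable space: the $e_\alpha$ are $K^{-1}$-separated (for $\alpha\neq\beta$, $\Vert e_\alpha-e_\beta\Vert_K\geq\langle e_\alpha, e_\alpha-e_\beta\rangle=1$, using $e_\alpha\in\calH$), so no countable set is dense.

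The main obstacle is the bookkeeping around the restriction: \eqref{closureprop} as stated allows restrictions to $\delta\in F$, whereas $\lambda$ need not lie in $F$ and need not even be $\leq\max F$. The point to get right is that restricting $f$ to $\lambda$ versus to a suitable $\delta\in F\cup\{\max F+1\}$ gives the same function when $\supp(f)\subset F$, because $\supp(f)\cap[\lambda,\delta)=\emptyset$ for the right choice of $\delta$; one should also handle the degenerate case $\lambda>\max F$, where $P_\lambda x=x$ restricted further has support inside $F$ and $f\restriction\lambda=f$, so we may even take $g=Kf$ if $Kf\in\calH$ — but more cleanly one notes $\Vert P_\lambda x\Vert_K\leq\Vert x\Vert_K\leq K\Vert x\Vert_K$ trivially in that case since $P_\lambda$ is norm-nonincreasing here. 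Apart from this, everything reduces to the observation that every $f\in\calH$ has range in $[-1,1]$ and that scaling-by-$K^{-1}$-restrictions stay in $\calH$, both of which are immediate from the recursive definition of $\calH_F$.
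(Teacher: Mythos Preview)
Your overall strategy is exactly the paper's: pick an $F\in\calF$ containing the relevant support, take $f\in\calH_F$ realizing $\Vert P_\lambda x\Vert_K$, and use the closure property \eqref{closureprop} to replace $f$ by a restricted-and-rescaled $g\in\calH_F$ acting on the full vector $x$. The normalization argument and the nonseparability remark are fine.

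However, there is a genuine error in the scaling direction. The closure property \eqref{closureprop} says that $K^{-1}(f\restriction\delta)\in\calH_F$ (the symbol $\lambda$ there is $K$; compare with amalgamation type~(3)). It does \emph{not} say that $K(f\restriction\delta)\in\calH_F$. So your choice $g:=K(f\restriction\delta)$ is not guaranteed to lie in $\calH$, and indeed the inequality you derive,
\[
\Vert P_\lambda x\Vert_K \;\leq\; \tfrac{1}{K}\,\Vert x\Vert_K,
\]
is false: take $x=e_\alpha$ with $\alpha<\lambda$, so the left side is $1$ and the right side is $K^{-1}<1$. The correct move is to set $g:=K^{-1}(f\restriction\delta)\in\calH_F$, whence $f\restriction\delta=Kg$ and
\[
\Vert P_\lambda x\Vert_K=\langle f,P_\lambda x\rangle=\langle f\restriction\delta,x\rangle=K\langle g,x\rangle\leq K\Vert x\Vert_K,
\]
which is the $K$-basis inequality, not a $K^{-1}$-basis inequality. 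With this correction your argument matches the paper's (the paper simply takes $\delta=\alpha_{n+1}\in F$, avoiding your bookkeeping about $\lambda\notin F$, but your workaround for that is fine).
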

\begin{proof}
It is clear that the $e_\alpha$'s are normalized. To see they are a $K$-basic sequence let $n<m<\omega$, $\alpha_1<\ldots<\alpha_m<\omega_1$
and $(a_i)_{i=1}^m\in\mathbb R^m$. Let $F\in\calF$ such that $\alpha_i\in F$ for $i=1,\ldots,m$. Take $\delta=\alpha_{n+1}$ and $f\in\calH_F$ such that $\sum_{i=1}^na_ie_{\alpha_i}$ attains the norm at $f$.

If $f$ is of the form (1) then $f\restriction\delta=Kg$ for some $g\in\calH_F$ and if $f$ is of the form (2) then $f=g$ for some $g\in\calH_F$. Thus,
\begin{align*}
\left\Vert\sum_{i=1}^n a_ie_{\alpha_i}\right\Vert_K&=\left|\left\langle f,\sum_{i=1}^na_ie_{\alpha_i}\right\rangle\right|
=\left|\left\langle f\restriction\delta,\sum_{i=1}^ma_ie_{\alpha_i}\right\rangle\right| \\
&\leq K\left|\left\langle g,\sum_{i=1}^na_ie_{\alpha_i}\right\rangle\right|\leq K\left\Vert\sum_{i=1}^m a_ie_{\alpha_i}\right\Vert_K
\end{align*} 
as we wanted to show.
\end{proof}

We proceed by contradiction. Suppose now that $(y_\alpha)_{\alpha<\omega_1}$ is a $K'$-basic sequence with $1\leq K'<K$.
Fix $K'<L<K$ and let $n<\omega$ such that
\begin{equation}\label{condi.1}
\frac{1}{K}+\frac{1}{n}<\frac{1}{L}
\end{equation}
Take a normalized sequence $(x_\alpha)_{\alpha<\omega_1}$ in $c_{00}(\omega_1,\mathbb Q)$ such that
$$
\Vert x_\alpha-y_\alpha\Vert_K <\min\Bigl\{\frac{1}{4K'n},\frac{L-K'}{8(K')^2n}\Bigr\} \quad\mbox{ for every }\alpha<\omega_1.
$$
The following lemma plays the same role of Lemma~\ref{lem1} in Theorem~\ref{theorem.1}
\begin{lemma}\label{lemma1} For every $\alpha_1<\ldots<\alpha_{2n}<\omega_1$
\begin{equation*}
\left\Vert \sum_{i=1}^n x_{\alpha_i}\right\Vert_K \leq L\left\Vert\sum_{i=1}^n x_{\alpha_i} -\sum_{i=n+1}^{2n} x_{\alpha_i}\right\Vert_K
\end{equation*}
\end{lemma}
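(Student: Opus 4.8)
The plan is to prove the inequality first for the $K'$-basic sequence $(y_\alpha)$ itself and then carry it over to the $x_\alpha$ by triangle inequalities calibrated to the chosen bound on $\Vert x_\alpha-y_\alpha\Vert_K$. At the outset I would reduce to the case that the $y_\alpha$ are normalized: replacing $y_\alpha$ by $y_\alpha/\Vert y_\alpha\Vert_K$ does not change the defining inequality of a $K'$-basic sequence (it only rescales the scalars), and the $y_\alpha$ are nonzero since the normalized $x_\alpha$ lie within distance $<1$ of them. Fix $\alpha_1<\dots<\alpha_{2n}$ and write $\tilde u=\sum_{i=1}^n y_{\alpha_i}$, $\tilde v=\sum_{i=n+1}^{2n}y_{\alpha_i}$, $u=\sum_{i=1}^n x_{\alpha_i}$, $v=\sum_{i=n+1}^{2n}x_{\alpha_i}$, and set $\eta=\min\{1/(4K'n),\,(L-K')/(8(K')^2n)\}$, so that $\Vert x_{\alpha_i}-y_{\alpha_i}\Vert_K<\eta$ for every $i$.

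The heart of the argument is to apply the $K'$-basic inequality to the scalar sequence that is $1$ on $\alpha_1,\dots,\alpha_n$, is $-1$ on $\alpha_{n+1},\dots,\alpha_{2n}$, and is $0$ elsewhere, for two cut-offs. With $\lambda=\alpha_{n+1}$ the corresponding initial-segment vector is exactly $\tilde u$, so
\[ \Vert\tilde u\Vert_K\le K'\,\Vert\tilde u-\tilde v\Vert_K; \]
with $\lambda=\alpha_2$ it is $y_{\alpha_1}$, whence $\Vert\tilde u-\tilde v\Vert_K\ge\Vert y_{\alpha_1}\Vert_K/K'=1/K'$. This last bound — the statement that the leading vector of a $K'$-basic sequence cannot be cancelled by the later ones — is the point I expect to be the crux: it is what keeps $\Vert u-v\Vert_K$ uniformly away from $0$, and the reason the slack built into the definition of $\eta$ is exactly what is needed.

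Finally I would assemble the estimates. From $\Vert u-\tilde u\Vert_K<n\eta$, $\Vert(u-v)-(\tilde u-\tilde v)\Vert_K<2n\eta$ and $\eta\le 1/(4K'n)$ one gets
\[ \Vert u-v\Vert_K\ \ge\ \Vert\tilde u-\tilde v\Vert_K-2n\eta\ \ge\ \frac1{K'}-\frac1{2K'}\ =\ \frac1{2K'}, \]
and from the displayed inequality for $\tilde u$,
\[ \Vert u\Vert_K\ \le\ \Vert\tilde u\Vert_K+n\eta\ \le\ K'\Vert\tilde u-\tilde v\Vert_K+n\eta\ \le\ K'\Vert u-v\Vert_K+(2K'+1)n\eta. \]
Thus it suffices that $(2K'+1)n\eta\le(L-K')\Vert u-v\Vert_K$, and since $\Vert u-v\Vert_K\ge 1/(2K')$ it is enough that $(2K'+1)n\eta\le(L-K')/(2K')$; plugging in $\eta\le(L-K')/(8(K')^2n)$ this reduces to $2K'(2K'+1)\le 8(K')^2$, i.e. to $K'\ge 1/2$, which holds because $K'\ge 1$. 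Substituting back gives $\Vert u\Vert_K\le K'\Vert u-v\Vert_K+(L-K')\Vert u-v\Vert_K=L\Vert u-v\Vert_K$, as required. The hypothesis \eqref{condi.1} is not used in this lemma; it will enter only in the subsequent capturing step, which plays the role of Lemma~\ref{lem2}.
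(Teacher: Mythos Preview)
Your proof is correct and follows essentially the same route as the paper's: first bound $\Vert u-v\Vert_K$ below by $1/(2K')$ via the $K'$-basic inequality applied to the leading term, then bound $\Vert u\Vert_K$ above by passing through the $y_\alpha$'s and absorbing the approximation errors using the two parts of the definition of $\eta$. The only cosmetic differences are that the paper phrases the lower bound by contradiction and uses the slightly looser error term $4K'n\eta$ in place of your $(2K'+1)n\eta$; your explicit normalization of the $y_\alpha$ makes precise something the paper leaves implicit when it writes $\Vert y_{\alpha_1}\Vert_K=1$.
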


\begin{proof}
Note first that $\Vert\sum_{i=1}^n x_{\alpha_i} -\sum_{i=n+1}^{2n} x_{\alpha_i}\Vert_K\geq 1/2K'$. Indeed, suppose otherwise then
\begin{align*}
1=\Vert y_{\alpha_1}\Vert_K&\leq K'\left\Vert\sum_{i=1}^ny_{\alpha_i}-\sum_{i=n+1}^{2n}y_{\alpha_i}\right\Vert_K \\
&\leq K'\left\Vert\sum_{i=1}^n x_{\alpha_i} -\sum_{i=n+1}^{2n} x_{\alpha_i}\right\Vert_K+K'\sum_{i=1}^{2n}\Vert y_{\alpha_i}-x_{\alpha_i}\Vert_K\\ &< K'\left(\frac{1}{2K'}+\frac{2n}{4K'n}\right)=1
\end{align*}

Now
\begin{align*}
\left\Vert\sum_{i=1}^n x_{\alpha_i}\right\Vert_K&\leq\left\Vert\sum_{i=1}^n y_{\alpha_i}\right\Vert_K+\sum_{i=1}^n \Vert x_{\alpha_i}-y_{\alpha_i}\Vert_K \\
&\leq K'\left\Vert\sum_{i=1}^ny_{\alpha_i}-\sum_{i=n+1}^{2n}y_{\alpha_i}\right\Vert_K +\sum_{i=1}^{n}\Vert x_{\alpha_i}-y_{\alpha_i}\Vert_K\\
&\leq K'\left\Vert\sum_{i=1}^n x_{\alpha_i} -\sum_{i=n+1}^{2n} x_{\alpha_i}\right\Vert_K+2K'\sum_{i=1}^{2n}\Vert x_{\alpha_i}-y_{\alpha_i}\Vert_K\\
&\leq K'\left\Vert\sum_{i=1}^n x_{\alpha_i} -\sum_{i=n+1}^{2n} x_{\alpha_i}\right\Vert_K+4K'n\frac{L-K'}{8(K')^2n}\\
&\leq K'\left\Vert\sum_{i=1}^n x_{\alpha_i} -\sum_{i=n+1}^{2n} x_{\alpha_i}\right\Vert_K+
(L-K')\left\Vert\sum_{i=1}^n x_{\alpha_i} -\sum_{i=n+1}^{2n} x_{\alpha_i}\right\Vert_K
\end{align*}
which is what we wanted to prove.
\end{proof}
We want to use the capturing of $\calF$ to contradict the lemma above.

We proceed as before and find $\Gamma\subset\omega_1$ uncountable such that

\begin{enumerate}
\item If $D_\alpha=\supp(x_\alpha)$, then the collection $(D_\alpha:\alpha\in\Gamma)$ form a $\Delta$-System with
       $|D_\alpha|=|D_\beta|=d$ for every $\alpha,\beta\in\Gamma$.
\item There is a function $z:d\rightarrow\mathbb Q$ such that, if $\varphi_\alpha:d\rightarrow D_\alpha$ is the unique order increasing
       bijection, then $x_\alpha=\varphi_\alpha(z)$
\end{enumerate}

Since $\calF$ is capturing, there is $F\in\calF$ and $\alpha_1<\ldots<\alpha_{2n}<\omega_1$ in $\Gamma$ such that $F$
captures $(D_{\alpha_i}:i=1,\ldots,2n)$.

Let
$$v=\sum_{i=1}^n x_{\alpha_i}\quad\mbox{and}\quad w=\sum_{i=1}^n x_{\alpha_i}-\sum_{i=n+1}^{2n}x_{\alpha_i}$$

We show that $\Vert v\Vert_K>L\Vert w\Vert_K$. Let $F=\bigcup_{i<n_k}F_i$ be the canonical decomposition of $F$.
Since the $x_{\alpha_i}$'s are normalized there is $h\in\calH_{F_0}$ such that $|\langle h,x_{\alpha_1}\rangle|=1$.
Taking $f=\sum_{i<n_k}\varphi_i(h)$ we get $|\langle f,v\rangle|=n$. Thus $\Vert v\Vert_K\geq n$.

Take now $f\in\calH_F$.

If $f$ is of the form (1) then, $|\langle f, w\rangle|=0$

If $f$ is of the form (2) then, $f=(1/K)\sum_{i<n_k}\varphi_i(h)\restriction\delta$ for some $\delta\in F$ and $h\in\calH_{F_0}$.
If $\delta\in R(F)$ then $|\langle f, w\rangle|=0$. Suppose $\delta\in F_j\setminus R(F)$ and $\eta\in F_{0}$ is such that
$\varphi_j(\eta)=\delta$

Suppose $j<n$ then
\begin{align*}
|\langle f, w\rangle| &\leq \left|\frac{1}{K}\langle \sum_{i<j}\varphi_i(h),w\rangle\right|+
\frac{1}{K}|\langle h\restriction\eta, x_{\alpha_1}\rangle| \\
&\leq \frac{n-1}{K}+\Vert x_{\alpha_0}\Vert_K=\frac{n-1}{K}+1<\frac{n}{L}\leq\frac{1}{L}\Vert v\Vert_K
\end{align*}
by \eqref{condi.1}.

Suppose now $j\geq n$. Then
{\footnotesize\begin{align*}
|\langle f,w\rangle|&\leq\frac{1}{K}\left|\sum_{i<n-1}\langle\varphi_i(h),x_{\alpha_i}\rangle+ \langle\varphi_{n-1}(h),x_{\alpha_{n-1}}\rangle-\sum_{n\geq i<j}\langle\varphi_i(h),x_{\alpha_i}\rangle - \langle\varphi_{j}(h)\restriction\delta,x_{\alpha_{j}}\rangle\right|\\
&\leq \frac{1}{K}|(n-1)+\langle h,x_{\alpha_0}\rangle-(j-n)-\langle h\restriction\eta,x_{\alpha_0}\rangle\\
&\leq \frac{n-1}{K}+\frac{\Vert x_{\alpha_0}\Vert_K+\Vert x_{\alpha_0}\restriction\eta\Vert_K}{K}
\leq\frac{n}{K}+1<\frac{n}{L}\leq \frac{1}{L}\Vert v\Vert_K
\end{align*}}

If $f$ is of the form (3) then $|\langle f, w\rangle|\leq 1\leq\frac{n}{K}+1<\frac{n}{L}\leq\frac{1}{L}\Vert v\Vert_K$.

We conclude that $\Vert w\Vert_K<\frac{1}{L}\Vert v\Vert_K$ but this contradicts Lemma~\ref{lemma1} and thus $\Xk$ is as we wanted.


\begin{thebibliography}{10}
\parskip=3mm

\bibitem{BGT}{
M. Bell, J. Ginsburg, and S. Todor\v{c}evi\'{c}, \emph{Countable spread of $exp\ Y$ and $\lambda Y$}, Top. Appl. {14} (1982) 1--12
}

\bibitem{HMVZ}{
Hajek, Petr; Montesinos Santaluc\'{i}a, Vicente; Vanderwerff, Jon; Zizler, V\'{a}clav, Biorthogonal systems in Banach spaces. CMS Books in Mathematics/Ouvrages de Mathématiques de la SMC, 26. Springer, New York, 2008. xviii+339 pp. ISBN: 978-0-387-68914-2 
}

\bibitem{JMS}{
M. Jimenez Sevilla and J.P. Moreno, \emph{Renorming Banach Spaces with the Mazur Intersection Property}, J. Funct. Anal. {144} (1997), 486--504.
}

\bibitem{LTz}{
 Lindenstrauss, Joram; Tzafriri, Lior, Classical Banach spaces. I. Sequence spaces. Ergebnisse der Mathematik und ihrer Grenzgebiete, Vol. 92. Springer-Verlag, Berlin-New York, 1977. xiii+188 pp. ISBN: 3-540-08072-4
} 

\bibitem{LAT}{
J. L\'{o}pez-Abad and S. Todor\v{c}evi\'{c}, \emph{Generic Banach spaces and generic simplexes}, J. Funct. Anal. {261} (2011), 300--386.
}


\bibitem{LT}{
Fulgencio L\'{o}pez and Stevo Todor\v{c}evi\'{c}, \emph{Trees and gaps from a construction scheme}, preprint 2015.
}

\bibitem{Tbaire}{
Stevo Todor\v{c}evi\'{c}, \emph{Biorthogonal systems and quotient spaces via Baire Category methods}, Math. Annalen 335 (2006), 687--715.
}

\bibitem{T2011}{
Todorcevic, Stevo, \emph{Combinatorial dichotomies in set theory}. Bull. Symbolic Logic 17 (2011), no. 1, 1--72. 
}

\bibitem{todor}{
Stevo Todor\v{c}evi\'{c}, \emph{A construction scheme for non-separable structures}, preprint 2014.
}

\end{thebibliography}
\end{document}